\newtheorem{theorem}{Theorem}
\newtheorem{lemma}[theorem]{Lemma}
\newtheorem{cor}[theorem]{Corollary}
\newtheorem{conj}[theorem]{Conjecture}
\theoremstyle{remark}
\newcommand {\N} {{\mathbb N}}
\newcommand {\Z} {{\mathbb Z}}
\newcommand {\D} {{\mathbb D}}
\newcommand{\ds}{\displaystyle}
\newcommand{\bv}\boldsymbol{}
\newcommand{\ignore}[1]{}
\renewcommand{\Re}{\textup{Re }}
\renewcommand{\mod}[1]{{\ifmmode\text{\rm\,(mod\,$#1$)}\else\discretionary{}{}{\hbox{ }}\rm(mod~$#1$)\fi}}
\begin{document}

\title{P\'{o}lya--Vinogradov and the least quadratic nonresidue}

\author{Jonathan W. Bober}
\address{Heilbronn Institute for Mathematical Research \\ Department of Mathematics, University of Bristol, Bristol, United Kingdom}
\email{{\tt j.bober@bristol.ac.uk}}

\author{Leo Goldmakher}
\address{Department of Mathematics and Statistics, Williams College, Williamstown, MA, USA}
\email{{\tt leo.goldmakher@williams.edu}}




\begin{abstract}
It is well-known that cancellation in short character sums (e.g. Burgess' estimates) yields bounds on the least quadratic nonresidue. Scant progress has been made on short character sums since Burgess' work, so it is desirable to find another approach to nonresidues. In this note we formulate a new line of attack on the least nonresidue via long character sums, an active area of research. Among other results, we demonstrate that improving the constant in the P\'olya--Vinogradov inequality would lead to significant progress on nonresidues. Moreover, conditionally on a conjecture on long character sums, we show that the least nonresidue for any odd primitive character (mod $k$) is bounded by $(\log k)^{1.4}$.
\end{abstract}

\maketitle
\thispagestyle{empty}



\section{The case of the odd character}

It is a long-standing open problem (first studied by Gauss as a key step in his first proof of Quadratic Reciprocity) to determine strong bounds on the least quadratic nonresidue modulo $p$, which we denote $n_p$. The celebrated P\'{o}lya--Vinogradov bound on character sums immediately implies the bound $n_p \ll \sqrt{p} \log p$, and an innovation due to Vinogradov reduced this to ${n_p \ll p^{1/(2\sqrt{e})} \log^2 p}$. Burgess' character sum bound further improved this to ${n_p \ll_\epsilon p^{1/4 + \epsilon}}$, and applying Vinogradov's trick gives
\begin{equation}
\label{eq:StateOfTheArt}
n_p \ll_\epsilon p^{1/(4\sqrt{e}) + \epsilon} .
\end{equation}
This bound remains the state of the art.\footnote{It should be pointed out that Burgess' character sum bound has been slightly improved by Hildebrand in \cite{Hi86}. However, his improvements do not yield an improvement of (\ref{eq:StateOfTheArt}).}

From this history one might infer that the way forward is to improve Burgess' theorem by exhibiting cancellation in shorter character sums. This seems to be a difficult problem, however, as no one has made progress on short character sums in a long time. By contrast, there have been a number of recent breakthroughs on long character sums (discussed below), and it is likely that more are on the horizon. The goal of the present note is to show that cancellation in long character sums can lead to strong bounds on $n_p$. To state our results more precisely, we need some notation.  Given a character $\chi\mod{q}$, set
\begin{equation}
\label{eq:Notation}
S_\chi(t) := \sum_{n \leq t} \chi(n) , \quad
M(\chi) := \max_{t \leq q} |S_\chi(t)|, \quad
\text{and}\quad
n_\chi := \min\{ n \in \N : \chi(n) \neq 0,1\} .
\end{equation}
In this notation, the P\'{o}lya--Vinogradov inequality asserts the bound $M(\chi) \ll \sqrt{q} \log q$. Our main result is the following.
\begin{theorem}
\label{thm:MainThm}
Fix notation as in \eqref{eq:Notation}, and suppose $M(\chi) \leq \sqrt{q} \cdot \frac{\log q}{f(q)}$ for all primitive even characters $\chi\mod{q}$
of order $g$,
where $f$ is some function satisfying $f'(x) \geq 0$ for all sufficiently large $x$. Then for all odd primitive Dirichlet characters $\xi\mod{k}$
of order $g$
with $3\dnd k$ we have
\[
n_\xi \ll
\exp\bigg(
	\frac{\pi \sqrt{3}}{2(\sqrt{e} - 1)} \cdot \frac{\log k}{f(k)}
\bigg) .
\]
\end{theorem}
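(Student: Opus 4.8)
The plan is to deduce the bound on $n_\xi$ by applying the hypothesis to a single well-chosen even character. Set $\psi := \xi\chi_{-3}$, where $\chi_{-3}$ denotes the (odd) nonprincipal character modulo $3$. Since $3\dnd k$, the product $\psi$ is primitive modulo $3k$; and since any character $\chi$ with $\chi(-1)=-1$ has even order, the very existence of an odd character of order $g$ forces $g$ to be even, whence $\psi(-1)=(-1)(-1)=1$ and (as $\xi$, $\chi_{-3}$ have coprime conductors) $\psi$ is an \emph{even} primitive character modulo $3k$ of order $g$. The hypothesis therefore applies: $M(\psi)\le\sqrt{3k}\cdot\log(3k)/f(3k)$. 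On the other hand, we may assume $y:=n_\xi-1$ is large. By the definition of $n_\xi$, every prime $p\le y$ satisfies $\xi(p)\in\{0,1\}$, the primes with $\xi(p)=0$ being exactly those dividing $k$; hence $\xi$, and therefore $\bar\xi$, is identically $1$ on every $y$-smooth integer coprime to $k$.

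The heart of the matter is a matching lower bound for $M(\psi)$. I would begin from P\'olya's Fourier expansion, which for an even primitive character $\psi$ modulo $q$ gives, for real $t$,
\[
S_\psi(t) \;=\; -\frac{\tau(\psi)}{\pi}\sum_{1\le n\le N}\frac{\bar\psi(n)}{n}\sin\frac{2\pi n t}{q} \;+\; O\!\Big(1+\frac{q\log q}{N}\Big),\qquad |\tau(\psi)| = \sqrt q .
\]
Averaging $S_\psi(q\beta)$ against a nonnegative kernel of total mass $1$ concentrated at scale $X^{-1}$ about the point $\beta=1/3$, and using $\sin(2\pi n/3)=\tfrac{\sqrt 3}{2}\chi_{-3}(n)$ together with $\bar\psi(n)\chi_{-3}(n)=\bar\xi(n)$ for $(n,3)=1$, one obtains a lower bound of the form
\[
M(\psi) \;\gg\; \frac{\sqrt{3k}}{\pi}\cdot\frac{\sqrt 3}{2}\;\Re\!\sum_{\substack{n\le X\\ 3\dnd n}}\frac{\bar\xi(n)}{n} \;-\; O(\log 3k).
\]
To bound the inner sum from below I would run Vinogradov's smooth-number trick: the $y$-smooth integers coprime to $3k$ contribute $+1/n$, while every $n$ with $\Re\bar\xi(n)<0$ has a prime factor exceeding $y$ and contributes at worst $-1/n$; writing $X=y^{u}$ and using $\sum_{n\le y^{u},\,P^+(n)\le y}1/n \sim \big(\int_0^{u}\rho(v)\,dv\big)\log y$ (with $\rho$ Dickman's function), the main term is proportional to $\big(2\int_0^{u}\rho-u\big)\log y$. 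The maximum of $2\int_0^{u}\rho-u$ is $2(\sqrt e-1)$, attained at $u=\sqrt e$ (where $\rho(u)=\tfrac12$); tracking the numerical factors — the $1/\pi$ from P\'olya, the $\sqrt 3$ from the conductor $3k$, the phase factor $\sqrt 3/2$, and the density $2/3$ of integers coprime to $3$ — this yields $M(\psi)\gg \tfrac{2(\sqrt e-1)}{\pi}\,\sqrt k\,\log y$.

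Comparing the two estimates gives $\log y \ll \frac{\pi\sqrt 3}{2(\sqrt e-1)}\cdot\frac{\log 3k}{f(3k)}$; since $f'\ge 0$ forces $f(3k)\ge f(k)$ while $\log 3k=\log k+O(1)$, this rearranges to $\log n_\xi \le \frac{\pi\sqrt 3}{2(\sqrt e-1)}\cdot\frac{\log k}{f(k)}+O(1)$, which is the assertion.

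The main obstacle is producing the constant exactly rather than merely up to an unspecified factor. This demands: (i) a sharp form of the averaged P\'olya lower bound, i.e. a kernel for which the $1/\pi$ and the phase factor $\sqrt 3/2$ survive intact; (ii) controlling every error term — the P\'olya truncation term $O(q\log q/N)$, the deviation of the smoothed $y$-smooth sum from $\big(\int_0^{u}\rho\big)\log y$, and the admissibility of the extremal scale $X=y^{\sqrt e}$ relative to $3k$ — down to $o(\log y)$; and (iii) handling the primes dividing $k$, on which $\xi$ vanishes, so that thinning the $y$-smooth integers coprime to $k$ does not erode the bound. Point (iii), together with checking that $u=\sqrt e$ lies in the range where the Fourier method is effective (and dealing separately with characters whose $n_\xi$ already exceeds a small power of $k$), is where I expect the genuine work to lie.
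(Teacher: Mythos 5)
Your proposal is essentially the paper's proof. The paper first establishes a more general intermediate result (Theorem~\ref{thm:LowerBound}: $\log n_\xi \le \tfrac{\pi}{2(\sqrt e-1)}\cdot M(\xi\psi)/\sqrt k + O(\sqrt\ell)$ for any odd $\psi\bmod\ell$ coprime to $k$) and then specializes $\psi = \big(\tfrac{\cdot}{3}\big)$; you inline that specialization from the start, but the core mechanism — P\'olya's Fourier expansion, extracting $\bar\xi(n)/n$ by evaluating near $\beta=1/3$, Vinogradov's smooth-number trick, and optimizing $-2\alpha\log\alpha+3\alpha-2$ at $\alpha=\sqrt e$ to get the constant $2(\sqrt e-1)$ — is identical, and all your numerical bookkeeping checks out. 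Where you smooth against a kernel at $1/3$, the paper instead invokes Lemma~\ref{lem:Lemma1} (truncation) and Lemma~\ref{lem:Lemma2} (the $\sqrt\ell/\varphi(\ell)$ lower bound from the twist), citing \cite{GLodd,GLeven}, which is how your concerns (i) and (ii) get resolved; your concern (iii) is resolved by noting that the sum already carries the weight $\xi(n)$, which vanishes unless $(n,k)=1$, so the split $\xi(n)=1+(\xi(n)-1)$ should simply be done over $(n,\ell k)=1$ — and in the prime-modulus case $k=p$ there is no thinning at all since $n\le y^\alpha<p$. One point in your favour: you explicitly verify that $g$ must be even (so $\xi\chi_{-3}$ has order $g$), a hypothesis-matching detail the paper's proof passes over silently. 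Your closing reduction $\log 3k/f(3k)\le\log k/f(k)+O(1)$ via monotonicity of $f$ is a slight shortcut compared with the paper's mean-value-theorem argument on $h=\log x/f(x)$, but both rely on the same normalization $f\ge1$ and give the same conclusion.
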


\noindent
In other words, improvements to the P\'olya--Vinogradov inequality (even of the implicit constant) would yield improvements of bounds on the least nonresidue. To get a feel for the strength of Theorem \ref{thm:MainThm}, we explore several consequences.

First, we consider the constant in the P\'olya--Vinogradov inequality. Hildebrand proved \cite[Corollary 5]{H88} that for all primitive even quadratic $\chi\mod{q}$ one has
\[
M(\chi) \leq \big( C + o(1) \big) 	\sqrt{q} \log q
\]
with $\ds C = \frac{\sqrt{e} - 1}{2\pi \sqrt{3e}} \approx 0.036$. Applying Theorem \ref{thm:MainThm} to Hildebrand's bound recovers the Burgess bound $n_p
\ll p^{\frac{1}{4\sqrt{e}} + o(1)}$ for all $p \equiv 3 \mod{4}$. Thus, any improvement over Hildebrand's constant --- for example, to $C = \frac{1}{9\pi} \approx 0.035$ --- would immediately yield an improvement of the Burgess exponent $\frac{1}{4\sqrt{e}}$ for primes $p \equiv 3\mod{4}$. (Hildebrand asserts, without proof, that the same is true for primes $p \equiv 1\mod{4}$; this is presumably a typo.)

If we are more optimistic, we might hope to improve P\'olya--Vinogradov further. In recent years, several infinite families of characters (namely, characters of odd order \cite{GS07,G} and characters with smooth conductor \cite{GSmooth}) have been shown to satisfy
\[
M(\chi) = o(\sqrt{q} \log q) .
\]
If we could prove such a bound for the family of all even characters
$\chi\mod{q}$, Theorem \ref{thm:MainThm} would imply
\[
n_\xi \ll k^{o(1)}
\]
for all odd $\xi\mod{k}$.

We conclude our discussion by exploring the limitations of Theorem \ref{thm:MainThm}. Working under the Generalized Riemann Hypothesis, Granville and Soundararajan have shown that
\[
    M(\chi) \le \left(\frac{2 e^\gamma}{\pi \sqrt{3}} + o(1)\right)
        \sqrt{q}\log\log q
\]
for every even primitive character\footnote{Although not explicitly stated, this is easily derived from the proof of Theorem 6 in \cite{GS07}.}, and conjecture that this can be improved to
\begin{equation}
\label{eq:ConjCharBound}
    M(\chi) \le \left(\frac{e^\gamma}{\pi \sqrt{3}} + o(1)\right)
        \sqrt{q}\log\log q .
\end{equation}
Applying Theorem \ref{thm:MainThm} yields the following.
\begin{cor}
\label{cor:ConjCor}
Suppose that the bound \eqref{eq:ConjCharBound} holds for all primitive even characters $\chi\mod{q}$. Then for all odd primitive characters
$\xi\mod{k}$ we have
\[
n_\xi \ll (\log k)^{c + o(1)} ,
\]
where $c = \frac{e^\gamma}{2(\sqrt{e} - 1)} \approx 1.37$.
\end{cor}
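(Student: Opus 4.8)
The plan is to obtain the corollary as a direct application of Theorem~\ref{thm:MainThm}; the substantive points are purely administrative — converting the $o(1)$ in \eqref{eq:ConjCharBound} into the $o(1)$ in the conclusion, handling the order parameter $g$, and (for the statement with no congruence restriction on $k$) dealing with the case $3\mid k$.

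Fix $\epsilon>0$. By \eqref{eq:ConjCharBound} there is a $q_\epsilon$ such that $M(\chi)\le\bigl(\tfrac{e^\gamma}{\pi\sqrt3}+\epsilon\bigr)\sqrt q\log\log q$ for every primitive even character $\chi\mod{q}$ with $q\ge q_\epsilon$. Set
\[
f_\epsilon(q):=\frac{\log q}{\bigl(\tfrac{e^\gamma}{\pi\sqrt3}+\epsilon\bigr)\log\log q},
\]
so that the bound becomes $M(\chi)\le\sqrt q\cdot\frac{\log q}{f_\epsilon(q)}$ for $q\ge q_\epsilon$; the finitely many $q<q_\epsilon$ may be absorbed into the implied constant, or dispatched by the trivial bound. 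Since $\frac{d}{dx}\frac{\log x}{\log\log x}\ge0$ as soon as $\log\log x\ge1$, we have $f_\epsilon'(x)\ge0$ for all large $x$, so $f_\epsilon$ is admissible in Theorem~\ref{thm:MainThm}. Working with this clean $f_\epsilon$, rather than a single $f$ carrying an $o(1)$, is exactly the point: the $o(1)$ in \eqref{eq:ConjCharBound} sits in the coefficient of $\log\log q$, and we must control how it is amplified by the exponential in the conclusion of Theorem~\ref{thm:MainThm}.

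Now let $g\ge2$ be even; every odd primitive character has even order. Since \eqref{eq:ConjCharBound} is assumed for all primitive even characters, it holds in particular for those of order $g$, so Theorem~\ref{thm:MainThm} applies with $f=f_\epsilon$ and this $g$. Because $\frac{\log k}{f_\epsilon(k)}=\bigl(\tfrac{e^\gamma}{\pi\sqrt3}+\epsilon\bigr)\log\log k$, we have $\tfrac{\pi\sqrt3}{2(\sqrt e-1)}\cdot\tfrac{\log k}{f_\epsilon(k)}=\bigl(c+\tfrac{\pi\sqrt3}{2(\sqrt e-1)}\epsilon\bigr)\log\log k$ with $c=\tfrac{e^\gamma}{2(\sqrt e-1)}$, and hence the theorem gives, for every odd primitive $\xi\mod{k}$ of order $g$ with $3\dnd k$,
\[
n_\xi\ \ll\ \exp\!\left(\frac{\pi\sqrt3}{2(\sqrt e-1)}\cdot\frac{\log k}{f_\epsilon(k)}\right)\ =\ (\log k)^{\,c+O(\epsilon)}.
\]
Ranging over all even $g\ge2$ (the implied constant of Theorem~\ref{thm:MainThm} being uniform in $g$), and then letting $\epsilon\downarrow0$ along a sequence, we obtain $n_\xi\ll(\log k)^{c+o(1)}$ for all odd primitive $\xi\mod{k}$ with $3\dnd k$. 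The one step that goes beyond such bookkeeping is the removal of this last restriction: when $3\mid k$ one reruns the scheme through the analogue of Theorem~\ref{thm:MainThm} in which the quadratic character modulo $3$ is replaced by an auxiliary odd character of small conductor coprime to $k$ — one exists of conductor $\ll\log k$, since $k$ has $O(\log k/\log\log k)$ prime factors — and then checks that this substitution perturbs the exponent only by a factor $1+o(1)$. I expect this to be the only place where genuine (as opposed to clerical) care is required, and it is the step I would write out in detail.
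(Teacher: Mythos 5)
Your main line — feeding $f_\epsilon(q)=\dfrac{\log q}{\bigl(\tfrac{e^\gamma}{\pi\sqrt3}+\epsilon\bigr)\log\log q}$ into Theorem~\ref{thm:MainThm}, checking $f_\epsilon'\ge0$ for large $x$, noting that $\frac{\pi\sqrt3}{2(\sqrt e-1)}\cdot\frac{\log k}{f_\epsilon(k)}=(c+O(\epsilon))\log\log k$, and then diagonalizing in $\epsilon$ — is exactly the paper's (unwritten) intent, and the handling of the order parameter $g$ is fine since \eqref{eq:ConjCharBound} is assumed for every primitive even character, hence for every order. The paper gives no explicit proof here (it just says ``Applying Theorem~\ref{thm:MainThm} yields the following''), so up to the last paragraph you have reconstructed the argument correctly.

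The last paragraph, however, contains a genuine error. Your proposed repair for $3\mid k$ — replacing $\bigl(\tfrac{\cdot}{3}\bigr)$ by an auxiliary odd $\psi\mod\ell$ with $\ell\ll\log k$ and asserting that this ``perturbs the exponent only by a factor $1+o(1)$'' — fails quantitatively. Tracing through Theorem~\ref{thm:LowerBound}, replacing $\ell=3$ by a general $\ell$ gives
\[
\log n_\xi \le \frac{\pi}{2(\sqrt e-1)}\cdot\frac{M(\xi\psi)}{\sqrt k}+O(\sqrt\ell)
\le \Bigl(\frac{e^\gamma}{2(\sqrt e-1)}\cdot\frac{\sqrt\ell}{\sqrt3}+o(1)\Bigr)\log\log k + O(\sqrt\ell),
\]
so the exponent $c$ is multiplied by $\sqrt{\ell/3}$, not by $1+o(1)$. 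Already $\ell=4$ or $\ell=5$ breaks the constant $c\approx1.37$, and if $\ell$ is allowed to grow like $\log k$ the bound degenerates to $\log n_\xi \ll \sqrt{\log k}\,\log\log k$, i.e.\ $n_\xi\ll\exp\bigl(\sqrt{\log k}\,\log\log k\bigr)$, which is not of the shape $(\log k)^{O(1)}$ at all. So this route does not remove the hypothesis $3\dnd k$. In fact Theorem~\ref{thm:MainThm} as stated only yields Corollary~\ref{cor:ConjCor} under the same restriction $3\dnd k$; the corollary as printed drops that restriction, and if one insists on covering $3\mid k$ a genuinely different argument (not just a larger auxiliary conductor) would be needed. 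Your instinct to flag the $3\mid k$ case as the one nontrivial point was sound, but the sketched fix does not close the gap, and you should not assert that it does.
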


\noindent
Note that this is stronger than Ankeny's \cite{An} long-standing GRH bound $n_p \ll (\log p)^2$. On the other hand, we fall short of achieving the conjectured bound $n_p \ll_\epsilon (\log p)^{1 + \epsilon}$. In fact, Corollary \ref{cor:ConjCor} is the strongest consequence one could hope to derive from Theorem \ref{thm:MainThm}, since Granville and Soundararajan have unconditionally proved that equality is attained in \eqref{eq:ConjCharBound} for infinitely many primitive even $\chi\mod{q}$; this follows immediately from equation (1.8) of \cite{GS07}.

The maximum size of the least nonresidue seems to be intimately linked with the maximum size of Dirichlet $L$-functions at $1$. Given an odd character $\xi \mod{p}$ with large least nonresidue, we construct an even character $\chi \mod{3p}$ for which $S_\chi(\alpha p)$ is large for some $\alpha$. In fact, it follows from work of the first author \cite{BoberAvg} that we can take $\alpha$ very close to $1$. Since $S_\chi(p) = \frac{\sqrt{p}}{\pi \sqrt{3}} L(1,\xi)$, one expects that $L(1,\xi)$ is also large; however, we are not able to prove this. If we were, we would be able to recover the Burgess exponent $\frac{1}{4\sqrt{e}} + o(1)$ from Stephens' work \cite{stephens} on the size of $L(1, \chi)$.

At the heart of the proof of Theorem \ref{thm:MainThm} is the following result, which provides an upper bound on the least nonresidue for an odd character in terms of the maximum size of a related character sum.
\begin{theorem}
\label{thm:LowerBound}
Suppose $\xi\mod{k}$ and $\psi\mod{\ell}$ are odd primitive Dirichlet characters such that $(k,\ell) = 1$. Then
\[
\log n_\xi \leq
\frac{\pi}{2(\sqrt{e} - 1)} \cdot \frac{M(\xi\psi)}{\sqrt{k}} + O(\sqrt{\ell}) .
\]
\end{theorem}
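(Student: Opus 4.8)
The plan is to prove the equivalent assertion: if $n_\xi$ is large then $M(\xi\psi)$ is large. Set $N := n_\xi$; we may assume $N > e^{\sqrt\ell}$, since otherwise $\log n_\xi \le \sqrt\ell$ and the claimed inequality is immediate. Because $\xi$ is completely multiplicative and every prime $q < N$ satisfies $\xi(q)\in\{0,1\}$, the integer $N$ is prime and $\xi(m) = 1$ for every $m$ coprime to $k$ all of whose prime factors lie below $N$. Since $\xi$ and $\psi$ are both odd and $(k,\ell)=1$, the product $\chi := \xi\psi$ is an \emph{even} primitive character modulo $k\ell$, with $|\tau(\chi)| = \sqrt{k\ell}$; and by classical P\'olya--Vinogradov $N \ll \sqrt k\log k$, so $N^{\sqrt e}$ is a fixed power of $k$ lying well inside $[1,k\ell]$.

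The heart of the argument is a bridge from $M(\xi\psi)$ to a sharply truncated partial sum of $L(1,\xi)$. Applying P\'olya's Fourier expansion to $\chi$ and evaluating at $t = \alpha k$ with $\alpha = 1-\delta$, $\delta \asymp N^{-\sqrt e}$, one has
\[
S_\chi(\alpha k) = \frac{\tau(\chi)}{2\pi i}\sum_{n\neq 0}\frac{\bar\xi(n)\,\bar\psi(n)}{n}\bigl(1 - e(-n\alpha/\ell)\bigr) + O(\log k\ell).
\]
Folding the sum over $\pm n$ using the oddness of $\xi$ and $\psi$, and expanding $\bar\psi(n)$ into additive characters modulo $\ell$ to strip off the Gauss-sum factors, the main sum reduces to $\sum_{n} \bar\xi(n)\,b(n)\,n^{-1}e(n\delta/\ell)$ for an explicit bounded $\ell$-periodic weight $b$ supported away from multiples of $\ell$. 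The point of taking $\alpha$ close to $1$ — a choice legitimized by the first author's work \cite{BoberAvg} — is that the slow oscillation of $e(n\delta/\ell)$ acts as an \emph{effective truncation} of this sum at length $\asymp 1/\delta = N^{\sqrt e}$: since $\alpha/\ell$ stays bounded away from $\Z$, replacing $e(n\delta/\ell)$ by $1$ for $n \le N^{\sqrt e}$ and discarding the tail costs only $O(1)$ in the sum, hence only $O(\sqrt{k\ell})$ after multiplying through by $\tau(\chi)$. Tallying the surviving Gauss-sum and kernel constants, we obtain
\[
M(\xi\psi) \ge |S_\chi(\alpha k)| \ge \frac{\sqrt k}{\pi}\,\Bigl|\sum_{n \le N^{\sqrt e}}\frac{\xi(n)}{n}\,w(n)\Bigr| - O(\sqrt{k\ell}),
\]
where $w$ is an explicit bounded nonnegative $\ell$-periodic weight (for $\ell=3$ one may take $w(n) = \tfrac32$ for $3\nmid n$ and $w(n)=0$ otherwise).

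It then remains to bound the inner sum from below, which is Vinogradov's trick. Splitting the range $n \le N^{\sqrt e}$ according to whether all prime factors of $n$ lie below $N$, the first part contributes a nonnegative main term of order $\log N$ (as $\xi\equiv 1$ there), while the second is controlled by iterating the ``peel off the largest prime factor'' estimate; the net bound is governed by the Dickman-type function $\sigma$ with $\sigma(v) = 1$ on $[0,1]$ and $v\sigma'(v) = -2\sigma(v-1)$ for $v>1$, so that $\sigma(v) = 1-2\log v$ on $[1,2]$, whence $\sigma(\sqrt e) = 0$ and $\int_0^{\sqrt e}\sigma(v)\,dv = 2(\sqrt e-1)$. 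Feeding the Vinogradov lower bound $S_\xi(t) \ge t\,\sigma(\log t/\log N) - O(t/\log N)$, valid for $t \le N^{\sqrt e}$, into partial summation, and noting that $w$ only deletes a density $1/\ell$ of the terms near the endpoint where $\sigma(\sqrt e)=0$ renders the deletion negligible, one gets
\[
\Bigl|\sum_{n \le N^{\sqrt e}}\frac{\xi(n)}{n}\,w(n)\Bigr| \ge 2(\sqrt e-1)\log N - O(1).
\]
Combining with the previous display gives $M(\xi\psi) \ge \tfrac{2(\sqrt e-1)}{\pi}\sqrt k\log N - O(\sqrt{k\ell})$, which rearranges to the claim.

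I expect the hard part to be the bridging step: extracting from the long even character sum $S_{\xi\psi}$ a genuine sharp-cutoff partial sum of $L(1,\xi)$ of length exactly $N^{\sqrt e}$, with the error held to $O(\sqrt{k\ell})$ rather than, say, $O(\sqrt{k\ell}\log k)$. This forces one to work with the \emph{untruncated} P\'olya expansion and to exploit the oscillatory cancellation produced by evaluating near the right endpoint $t\approx k$ — precisely where the input of \cite{BoberAvg} is needed — as well as to carry the Gauss-sum and conductor-$\ell$ bookkeeping exactly, since both the constant $\tfrac{\pi}{2(\sqrt e-1)}$ and the error term depend on it. A secondary, routine but delicate, point is handling the primes dividing $k\ell$ inside Vinogradov's trick so that the smooth main term attains the full size $\log N$.
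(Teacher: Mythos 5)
Your overall architecture matches the paper's: P\'olya's Fourier expansion for the even character $\chi=\xi\psi$, reduction to a truncated partial sum $\sum_{n\le N}\xi(n)/n$, then Vinogradov's trick optimized at $N = n_\xi^{\sqrt e}$, with the constant $2(\sqrt e - 1)$ emerging the same way (your integral $\int_0^{\sqrt e}\sigma$ equals the paper's maximum of $-2\alpha\log\alpha+3\alpha-2$). However, your bridging step has a genuine gap. You propose to evaluate at one fixed point $t = \alpha k$ with $\delta := 1-\alpha \asymp N^{-\sqrt e}$ and then assert that the tail $n > N^{\sqrt e}$ of the P\'olya series contributes only $O(1)$. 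This is not justified and appears to be false for a generic such $\alpha$: partial summation with P\'olya--Vinogradov only bounds the tail by $O(\sqrt{q}\log q / N^{\sqrt e})$, and since $n_\xi \ll q^{1/4+\epsilon}$ (Burgess), we have $N^{\sqrt e} \ll q^{0.41} \ll \sqrt{q}$, so this tail bound can be a positive power of $q$. There is no cheap way to upgrade this to $O(1)$ at a prescribed $\alpha$.

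The paper avoids this by never fixing $\alpha$. It applies two maximal-in-$\theta$ lemmas: Lemma~\ref{lem:Lemma1} (from \cite{GLodd}), which says $\max_\theta$ of the full length-$q$ sum equals $\max_\theta\max_{N\le q}$ of the truncated sums up to $O(1)$ --- a pigeonhole/$L^2$-averaging fact that only holds after the supremum over $\theta$, not pointwise --- and then Lemma~\ref{lem:Lemma2} (from \cite{GLeven}), which at one stroke strips off the $\psi$-factor and the exponential, leaving $\frac{\sqrt\ell}{\phi(\ell)}\bigl|\sum_{n\le N,\,(n,\ell)=1}\xi(n)/n\bigr|$. Your idea of ``expanding $\bar\psi$ into additive characters to peel off Gauss sums'' is morally the proof of Lemma~\ref{lem:Lemma2}, but detached from the maximal structure it does not control the tail. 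Also note that \cite{BoberAvg} is not an ingredient of the paper's proof of this theorem; it is invoked only in the motivational discussion about $L(1,\xi)$ being large, so leaning on it to legitimize the choice $\alpha\approx 1$ is a misreading. To repair your argument you would either need to reprove the averaging content of Lemma~\ref{lem:Lemma1} (i.e., show \emph{some} $\alpha$ near your target makes the tail $O(1)$, not that your chosen $\alpha$ does) or simply cite the two lemmas as the paper does. The Vinogradov-trick half of your write-up is fine and equivalent to Lemma~\ref{lem:ForVino}, modulo routine care with the coprimality-to-$\ell$ restriction.
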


\noindent
This demonstrates that we can improve bounds on $n_\xi$ by finding a \emph{single} twist of $\xi$ for which P\'olya--Vinogradov can be beaten. Although this seems stronger than Theorem \ref{thm:MainThm}, we have not been able to exploit the extra flexibility.

We expect that neither Theorem \ref{thm:MainThm} nor Theorem \ref{thm:LowerBound} is optimal. In Section \ref{sect:Heuristic} we give a heuristic argument which suggests the following.
\begin{conj}
\label{conj:StrongConj}
Let $\chi(n) := \xi(n) \left(\frac{n}{3}\right)$, where $\xi \mod{k}$ is an odd primitive character such that $3 \dnd k$. Then
\[
\log n_\xi \leq
\left(\frac{\pi}{e^\gamma} + o(1) \right)
	\frac{M(\chi)}{\sqrt{k}} .
\]
\end{conj}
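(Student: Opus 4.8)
To motivate Conjecture~\ref{conj:StrongConj} the plan is to estimate the single quantity $S_\chi(k)$ directly, where $\chi(n)=\xi(n)\left(\frac n3\right)$ is a primitive \emph{even} character modulo $q:=3k$, and then to use the trivial inequality $M(\chi)\ge |S_\chi(k)|$. Write $y:=n_\xi$. The key structural input is that the definition of the least nonresidue forces $\xi(p)=1$ for every prime $p<y$ with $p\nmid k$, and hence $\xi(n)=1$ — equivalently $\chi(n)=\left(\frac n3\right)$ — for every $y$-smooth integer $n$ coprime to $k$. Thus $S_\chi(k)$ ``remembers'' a long initial stretch on which $\chi$ agrees with $\left(\frac\cdot3\right)$.

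To exploit this I would feed it into the P\'olya--Fourier expansion; since $\chi$ is even and $k=q/3$, this reads
\[
S_\chi(k)=\frac{\tau(\chi)}{\pi}\sum_{1\le n\le H}\frac{\bar\chi(n)}{n}\sin\!\Big(\frac{2\pi n}{3}\Big)+O\!\Big(1+\frac{q\log q}{H}\Big).
\]
The point of working at $t=k$ is a resonance: for every $n$ coprime to $3$ one has $\left(\frac n3\right)\sin(2\pi n/3)=\tfrac{\sqrt3}{2}$, a fixed positive constant (and the terms with $3\mid n$ vanish). Hence the $y$-smooth terms contribute a genuine main term
\[
\frac{\sqrt3}{2}\sum_{\substack{n\le H,\ (n,q)=1\\ n\ \text{$y$-smooth}}}\frac1n\ \sim\ \frac{\sqrt3}{2}\prod_{\substack{p\le y\\ p\nmid q}}\Big(1-\frac1p\Big)^{-1}.
\]
By Mertens' theorem $\prod_{p\le y}(1-1/p)^{-1}=(e^\gamma+o(1))\log y$; removing the Euler factor at $p=3$ (and, when $k$ has no small prime factors, nothing further) leaves $(\tfrac23 e^\gamma+o(1))\log y$, so the main term equals $\big(\tfrac{e^\gamma}{\sqrt3}+o(1)\big)\log y$.

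Granting that the non-$y$-smooth terms, together with the truncation error for a suitable power $H$ of $q$, contribute only $o(\log y)$, one obtains
\[
|S_\chi(k)|\ \sim\ \frac{\sqrt q}{\pi}\cdot\frac{e^\gamma}{\sqrt3}\log y\ =\ \frac{e^\gamma}{\pi}\,\sqrt k\,\log n_\xi,
\]
and $M(\chi)\ge |S_\chi(k)|$ then yields exactly the bound of Conjecture~\ref{conj:StrongConj}. Equivalently, using $S_\chi(k)=\frac{\sqrt k}{\pi}L(1,\bar\xi)$ (the resonance identity collapses the Fourier series to $\tfrac{\sqrt3}{2}(1-\bar\xi(3)/3)L(1,\bar\xi)$, and $\bar\xi(3)=1$ since $3<n_\xi$), the whole computation amounts to the assertion that the partial Euler product $\prod_{p<n_\xi}(1-1/p)^{-1}\sim e^\gamma\log n_\xi$ over the small primes is not diminished by the tail.

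That last clause is precisely where the argument stops being rigorous: to turn it into a theorem one would have to show that $\prod_{p\ge n_\xi}(1-\bar\xi(p)/p)^{-1}=1+o(1)$ — that the primes beyond the least nonresidue do not conspire to cancel the main term, i.e.\ that $L(1,\bar\xi)$ really is as large as the small primes predict. This is exactly the obstruction flagged after Theorem~\ref{thm:LowerBound}, and I see no reason it should be more tractable in this Fourier-analytic guise than as a statement about $L$-values; this is the main difficulty. A secondary, more mechanical, point is the effect of small primes dividing $k$ on the Mertens product — this is where part of the $o(1)$ is spent, harmless for $k$ prime but to be tracked in general — and one should also check that no point $t$ other than $t=k$ and $t=2k$ does better, which the same resonance heuristic suggests but does not prove.
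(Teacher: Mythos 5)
Your heuristic reaches the same conjecture as the paper's and shares its skeleton: both reduce $M(\chi)$ to the logarithmic mean $\sum_{n}\xi(n)/n$ via the P\'olya expansion twisted by $\left(\frac{\cdot}{3}\right)$ --- your resonance at $t=k$ is precisely what the paper's Lemma~\ref{lem:Lemma2} delivers for $\psi=\left(\frac{\cdot}{3}\right)$ --- and both then posit that $\xi$ equals $1$ on $n_\xi$-smooth integers and contributes negligibly elsewhere. Where you diverge is in evaluating the main term: the paper partitions $[1,k]$ into ranges $[y^{u_{j-1}},y^{u_j})$, invokes the Dickman--de Bruijn density $\rho(u)$, and uses $\int_0^\infty \rho(u)\,du = e^\gamma$, whereas you sum the reciprocals of the $y$-smooth integers directly via the Euler product and Mertens' theorem, $\prod_{p<y}(1-1/p)^{-1}\sim e^\gamma\log y$. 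These are equivalent evaluations of the same object, but your route has the virtue of stating the unproved step in its sharpest form: the conjecture is essentially the assertion that $\prod_{p\ge n_\xi}\left(1-\bar{\xi}(p)/p\right)^{-1}=1+o(1)$, i.e.\ that the tail of the Euler product for $L(1,\bar{\xi})$ does not wash out the contribution of the small primes. The paper points at exactly this difficulty in less explicit language, both in the phrase ``$\xi$ should behave randomly on those $n$ which are not $n_\xi$-smooth'' and in the remark after Theorem~\ref{thm:LowerBound} that $S_\chi(p)=\frac{\sqrt{p}}{\pi\sqrt{3}}L(1,\xi)$ but that largeness of $L(1,\xi)$ cannot be established. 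Two small points: your closing worry about whether some $t\neq k,2k$ does better is immaterial, since the argument only needs the one-sided bound $M(\chi)\ge|S_\chi(k)|$; and the issue of small primes dividing $k$ that you flag as a ``secondary'' concern is equally present, and equally unaddressed, in the paper's Dickman heuristic --- both arguments tacitly require $k$ to have few small prime factors, which is fine for the intended application to $k=p$ prime.
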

\noindent
Combined with the conjectural bound \eqref{eq:ConjCharBound} on long character sums, this would yield
\[
\phantom{\text{for all } p \equiv 3\mod{4}} \qquad
n_p \leq (\log p)^{1+o(1)}
\qquad \text{for all } p \equiv 3\mod{4} .
\]

\noindent
\emph{Acknowledgements.}
We are grateful to John Friedlander and Soundararajan for encouragement and some helpful suggestions. We would also like to thank Enrique Trevi\~no for correcting a misunderstanding about Hildebrand's result, and the anonymous referee for meticulously reading the paper and making useful suggestions. The second author was partially supported by an NSERC Discovery Grant.

\section{Proofs of Theorems \ref{thm:MainThm} and \ref{thm:LowerBound}}

We begin by showing that Theorem \ref{thm:MainThm} is an easy consequence of Theorem \ref{thm:LowerBound}.

\begin{proof}[Proof of Theorem \ref{thm:MainThm}]
Let $h(x) := \frac{\log x}{f(x)}$, so that our hypothetical improvement of P\'{o}lya--Vinogradov becomes: for all primitive even $\chi\mod{q}$,
\[
M(\chi) \leq \sqrt{q} \cdot h(q) .
\]
Note that we may (and will) assume that $h(x) \leq \log x$, or equivalently that $f(x) \geq 1$, for all sufficiently large $x$.

Given any odd primitive characters $\xi \mod{k}$ and $\psi\mod{\ell}$ with $(k,\ell)=1$, set $\chi := \xi \psi$ and $q := k \ell$. It follows that $\chi\mod{q}$ is a primitive even character, so by Theorem \ref{thm:LowerBound} combined with our hypothesis on $M(\chi)$, we deduce
\[
\log n_\xi \leq
\frac{\pi \sqrt{\ell}}{2(\sqrt{e} - 1)} \cdot h(q) + O(\sqrt{\ell}) .
\]
We now show that $h(q) \leq h(k) + O(\ell)$. First, observe that
\[
h'(x) = \frac{1}{x \cdot f(x)} - \frac{f'(x) \log x}{f(x)^2}
\leq \frac{1}{x} ,
\]
since $f(x) \geq 1$ and $f'(x) \geq 0$. The Mean Value Theorem implies that for some $x_k \in [k,q]$,
\[
h(q) - h(k) = k(\ell - 1) \cdot h'(x_k) \leq \ell -1.
\]
It follows that
\[
\log n_\xi \leq
\frac{\pi \sqrt{\ell}}{2(\sqrt{e} - 1)} \cdot h(k) + O(\ell^{3/2}) .
\]
If $3 \dnd k$, we may make the choice $\psi(n) := \big(\frac{n}{3}\big)$, which yields
\[
n_\xi \ll
\exp\Big(\frac{\pi \sqrt{3}}{2(\sqrt{e}-1)} \cdot h(k)\Big)  =
\exp\Big(\frac{\pi \sqrt{3}}{2(\sqrt{e}-1)} \cdot \frac{\log k}{f(k)}\Big) .
\qedhere
\]
\end{proof}

We now shift our attention to Theorem \ref{thm:LowerBound}. Here is a rough sketch of our argument. In \cite{GLeven} it was shown that for any primitive odd characters $\xi\mod{k}$ and $\psi\mod{\ell}$, we have
\begin{equation}
\label{eq:Thm3Outline}
M(\xi\psi) \gtrsim \frac{\sqrt{k}}{\pi} \left|\sum_{n \leq N} \frac{\xi(n)}{n}\right|
\end{equation}
for any small $N$. To make the inequality as tight as possible, one wishes to choose $N$ which maximizes the sum on the right hand side. The natural choice $N = n_\xi$ would give
${M(\xi\psi) \gtrsim \frac{\sqrt{k}}{\pi} \log n_\xi}$,
which is of the same shape as our claim but with a weaker constant. To do better, we employ a version of Vinogradov's trick. For $n > n_\xi$ it is hard to predict the behavior of $\xi(n)$ in general; however, we know that $\xi(n) = 1$ for all $n_\xi$-smooth $n$, and there are many smooth numbers just a bit larger than $n_\xi$. Thus, taking $N$ to be slightly bigger than $n_\xi$ increases the size of the sum in \eqref{eq:Thm3Outline}. Optimizing the balance between the mysterious behavior of $\xi(n)$ and the well-understood behavior of smooth numbers, we settle on a choice of $N$ which gives the bound in Theorem \ref{thm:LowerBound}.

Before making the above outline rigorous, we state a few preliminary results. The following two estimates, developed in \cite{GLodd} and \cite{GLeven} to show that character sums can get large, significantly simplify our computations.

\begin{lemma}[see Lemma 2.2 of \cite{GLodd}]
\label{lem:Lemma1}
Let $\{a_n\}_{n\in \mathbb{Z}}$ be a sequence of complex numbers with $|a_n|\leq 1$ for all $n$, and let $x\geq 2$ be a real number.
Then
\[
\max_{\theta\in [0,1]}
\left|
	\sum_{1\leq |n|\leq x}\frac{a_n}{n} e(n\theta)
\right|
=
\max_{\theta\in [0,1]} \max_{N\leq x}
\left|
	\sum_{1\leq |n|\leq N}\frac{a_n}{n} e(n\theta)
\right|
+O(1).
\]
\end{lemma}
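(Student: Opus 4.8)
The plan is to show that the two maxima differ by only $O(1)$ by controlling, for each fixed $\theta$, how much the partial sum $\sum_{1 \le |n| \le N} \frac{a_n}{n} e(n\theta)$ can overshoot its terminal value at $N = x$. Write $T_N(\theta) := \sum_{1 \le |n| \le N} \frac{a_n}{n} e(n\theta)$. Since the inner maximum over $N \le x$ dominates the choice $N = x$, one inequality ($\ge$, up to $O(1)$) is trivial; the content is the reverse, namely that $\max_{N \le x} |T_N(\theta)| \le |T_{\lfloor x \rfloor}(\theta)| + O(1)$ uniformly in $\theta$, or at least that the difference of the double maxima is $O(1)$.

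First I would split off the terms with $|n|$ small — say $|n| \le 100$ — which contribute at most $\sum_{1 \le |n| \le 100} \frac{1}{|n|} = O(1)$ to \emph{any} partial sum, so they can be absorbed into the error term; this lets us assume the summation ranges over $|n| > 100$. Next, for the tail I would apply summation by parts (Abel summation) to the sequence $\frac{1}{n}$ against the partial sums of $a_n e(n\theta)$: writing $A(u,\theta) := \sum_{100 < |n| \le u} a_n e(n\theta)$, we get $T_N(\theta) - T_{\lfloor x\rfloor}(\theta)$ as a sum/integral of $A(\cdot,\theta)$ against $d(1/u)$ plus boundary terms, each weighted by factors of size $O(1/N)$ or smaller. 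The key point is that $A(u,\theta)$ is the difference of two geometric-type sums $\sum_{n \le u} a_n e(n\theta)$ with $|a_n| \le 1$, which one bounds crudely by $\min(u, \|\theta\|^{-1})$ — but after dividing by $n \asymp N$ in the Abel rearrangement the relevant quantity is $\min(1, (N\|\theta\|)^{-1})$, which is $O(1)$. Carrying out the bookkeeping, the difference between $T_N(\theta)$ and $T_{\lfloor x\rfloor}(\theta)$ is $O(1)$ uniformly in both $N \le x$ and $\theta$, which gives the lemma.

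The main obstacle is making the Abel-summation estimate genuinely uniform in $\theta$, including the regime where $\theta$ is extremely close to a rational with small denominator (so that $\|\theta\|^{-1}$ is large, even comparable to $x$). In that regime the geometric-sum bound $\min(u,\|\theta\|^{-1})$ is weak, and one must instead argue that $T_N(\theta)$ itself is already $O(1)$ for \emph{all} such $N$ — or more carefully track the interplay between $N$ and $\|\theta\|$ so that the overshoot $|T_N(\theta)| - |T_{\lfloor x \rfloor}(\theta)|$ stays bounded. I expect this case analysis (small $\|\theta\|$ versus $\|\theta\|$ bounded below) to be where essentially all the work lies; everything else is routine partial summation. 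Since this is exactly Lemma 2.2 of \cite{GLodd}, I would ultimately defer the detailed estimates to that reference and only indicate the structure above.
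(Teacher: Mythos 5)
Your approach has a genuine gap, and it lies in the very step you describe as routine, not in the case analysis at the end. You aim to show that $\max_{N \le x} |T_N(\theta)| \le |T_{\lfloor x \rfloor}(\theta)| + O(1)$ \emph{pointwise} in $\theta$, but this is simply false. Take $a_{-n}=-a_n$, with $a_n=1$ for $1 \le n \le \sqrt{x}$ and $a_n=-1$ for $\sqrt{x}<n\le x$, and take $\theta=0$. Then $T_{\sqrt{x}}(0) = 2\sum_{n\le\sqrt{x}}1/n = \log x + O(1)$, while $T_x(0) = 2\bigl(\sum_{n\le\sqrt{x}}1/n - \sum_{\sqrt{x}<n\le x}1/n\bigr) = O(1)$, so the overshoot is $\asymp\log x$. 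The specific error in your Abel-summation step is the assertion that $\sum_{n\le u} a_n e(n\theta) \ll \min(u, \|\theta\|^{-1})$: that geometric-sum bound requires the $a_n$ to be constant (or of bounded variation), whereas here they are arbitrary unimodular-bounded numbers --- for instance $a_n=e(-n\theta)$ makes the sum equal to $\lfloor u\rfloor$ with no decay in $\|\theta\|$ at all. So the $\|\theta\|^{-1}$ savings you are counting on is not available, and no pointwise comparison between $T_N(\theta)$ and $T_x(\theta)$ at the \emph{same} $\theta$ can succeed.

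The proof must exploit the maximum over $\theta$ on the right-hand side, not merely the evaluation at $N=x$. A clean route (and, I believe, essentially the one in \cite{GLodd}) is an averaging identity: for any $\theta$ and $2\le N\le x$,
\[
N\int_0^{1/N}\sum_{1\le|n|\le x}\frac{a_n}{n}\,e\bigl(n(\theta+\beta)\bigr)\,d\beta
\;=\;\sum_{1\le|n|\le x}\frac{a_n}{n}\,e(n\theta)\cdot\frac{e(n/N)-1}{2\pi i\,n/N}.
\]
The multiplier $\frac{e(n/N)-1}{2\pi i\,n/N}$ equals $1+O(|n|/N)$ for $|n|\le N$ and is $O(N/|n|)$ for $|n|>N$; after weighting by $1/|n|$ both error contributions sum to $O(1)$. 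Hence the left-hand side equals $T_N(\theta)+O(1)$, while it is trivially bounded in modulus by $\max_{\theta'}|T_x(\theta')|$. This yields $|T_N(\theta)|\le\max_{\theta'}|T_x(\theta')|+O(1)$ uniformly in $N\le x$ and $\theta$, which is the nontrivial inequality; the point is that averaging over a $\theta$-window of width $1/N$ is exactly what damps the frequencies $|n|>N$, replacing the false pointwise comparison with a true one after taking $\max_{\theta'}$.
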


\begin{lemma}[see Lemma 2.1 of \cite{GLeven}]
\label{lem:Lemma2}
If $\psi \mod{m}$ is a primitive Dirichlet character, then
\[
\max_{\theta\in[0,1]} \left| \sum_{n \in \Z} b_n \psi(n) e(n\theta) \right|
	\geq
		\frac{\sqrt{m}}{\phi(m)}
			\bigg| \sum_{(n,m) = 1} b_n \bigg|
\]
for any set of complex numbers $\{b_n\}$ satisfying $\sum |b_n| < \infty$.
\end{lemma}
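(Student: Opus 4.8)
\emph{Proof proposal.} The plan is to deduce the lemma directly from the classical Gauss sum identity for primitive characters. Write $G(\theta) := \sum_{n\in\Z} b_n\psi(n)e(n\theta)$; because $\sum_n|b_n|<\infty$ and $|\psi(n)e(n\theta)|\le 1$, this series converges absolutely and uniformly in $\theta$, so the rearrangements below are legitimate. Recall that for a primitive character $\psi\mod{m}$, with associated Gauss sum $\tau(\psi):=\sum_{a\mod{m}}\psi(a)e(a/m)$, one has $|\tau(\psi)|=\sqrt m$ together with the identity
\[
\sum_{a\mod{m}}\psi(a)\,e\!\left(\frac{an}{m}\right)=\bar\psi(n)\,\tau(\psi)\qquad\text{for every }n\in\Z.
\]
The case $(n,m)=1$ is an elementary change of variables; the case $(n,m)>1$, in which the left-hand side must vanish to match $\bar\psi(n)=0$, is precisely where the primitivity hypothesis is used.

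Next I would form the $\psi$-twisted sum of $G$ over the $m$-division points $a/m$. Interchanging the finite sum over $a$ with the absolutely convergent sum over $n$ and applying the identity above,
\begin{align*}
\sum_{a\mod{m}}\psi(a)\,G\!\left(\frac{a}{m}\right)
&=\sum_{n\in\Z}b_n\,\psi(n)\sum_{a\mod{m}}\psi(a)e\!\left(\frac{an}{m}\right)\\
&=\tau(\psi)\sum_{n\in\Z}b_n\,\psi(n)\bar\psi(n)
=\tau(\psi)\sum_{(n,m)=1}b_n,
\end{align*}
since $\psi(n)\bar\psi(n)=|\psi(n)|^2=\mathbf{1}_{(n,m)=1}$. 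Taking absolute values and using $|\tau(\psi)|=\sqrt m$ yields the exact identity
\[
\Bigl|\sum_{(n,m)=1}b_n\Bigr|=\frac{1}{\sqrt m}\,\Bigl|\sum_{a\mod{m}}\psi(a)\,G\!\left(\tfrac{a}{m}\right)\Bigr|.
\]

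Finally I would bound the right-hand side trivially: by the triangle inequality, and because $|\psi(a)|\le 1$ with $\psi(a)\ne 0$ for exactly $\phi(m)$ residues $a\mod{m}$,
\[
\Bigl|\sum_{a\mod{m}}\psi(a)\,G\!\left(\tfrac{a}{m}\right)\Bigr|\le\phi(m)\,\max_{\theta\in[0,1]}|G(\theta)|.
\]
Combining the last two displays and rearranging gives $\max_{\theta\in[0,1]}|G(\theta)|\ge\frac{\sqrt m}{\phi(m)}\bigl|\sum_{(n,m)=1}b_n\bigr|$, which is the assertion. I do not anticipate a genuine obstacle here: the only points requiring care are the (routine) justification of the interchange of summation, supplied by absolute convergence, and the validity of the Gauss sum identity for \emph{all} $n$, including those with $(n,m)>1$ — which is exactly the role of primitivity. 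The passage from the $\psi$-weighted sum to $\phi(m)\max|G|$ is wasteful, and sharpening it is the natural avenue toward a better constant; but for the stated inequality the crude estimate suffices.
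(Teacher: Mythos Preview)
Your argument is correct: averaging $G$ over the rational points $a/m$ with weights $\psi(a)$ and invoking the Gauss sum identity $\sum_{a}\psi(a)e(an/m)=\bar\psi(n)\tau(\psi)$ (valid for all $n$ by primitivity) is exactly the right mechanism, and the final triangle-inequality step loses only the factor $\phi(m)$ as needed. The present paper does not actually prove this lemma but cites it from \cite{GLeven}; your proof is the standard one and matches the argument given there.
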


\noindent
Next, in order to apply our version of Vinogradov's trick, we shall require the following estimate. Given an integer $n \geq 2$,  denote the largest prime factor of $n$ by $P^+(n)$.
\begin{lemma}
\label{lem:ForVino}
For any fixed $\alpha \in [1,2]$ we have
\[
\sum_{\substack{n \leq y^\alpha \\ P^+(n) > y}} \frac{1}{n}
=
(\alpha\log \alpha - \alpha + 1) \log y + O(1) .
\]
\end{lemma}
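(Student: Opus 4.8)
The plan is to evaluate the sum by complementary counting: subtract the contribution of the $y$-smooth integers from the full harmonic sum up to $y^\alpha$. First I would write
\[
\sum_{\substack{n \leq y^\alpha \\ P^+(n) > y}} \frac{1}{n}
=
\sum_{n \leq y^\alpha} \frac{1}{n}
-
\sum_{\substack{n \leq y^\alpha \\ P^+(n) \leq y}} \frac{1}{n} .
\]
The first sum is $\alpha \log y + O(1)$ by the standard estimate $\sum_{n\leq x} 1/n = \log x + O(1)$. So everything reduces to showing that the sum over $y$-smooth integers up to $y^\alpha$ equals $\alpha\log y - (\alpha\log\alpha - \alpha + 1)\log y + O(1) = (1 - \alpha\log\alpha + \alpha - 1 + \cdots)$; more precisely, I need
\[
\sum_{\substack{n \leq y^\alpha \\ P^+(n) \leq y}} \frac{1}{n}
= \alpha \log y - (\alpha\log\alpha - \alpha + 1)\log y + O(1)
= \big(1 + \log\alpha\big)\alpha\log y - \cdots
\]
— I would simply compute the target constant $\alpha - (\alpha\log\alpha - \alpha + 1) = 2\alpha - 1 - \alpha\log\alpha$ in terms of the Dickman function below rather than guess it here.

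The key tool is the smooth-number density: for $x = y^u$ with $u = u(x)$ bounded (here $u \in [1,\alpha] \subseteq [1,2]$), one has $\Psi(x,y) = \rho(u)\,x + O(x/\log x)$, where $\rho$ is the Dickman function, which on $[0,1]$ is identically $1$ and on $[1,2]$ satisfies $\rho(u) = 1 - \log u$. Using partial summation against $\Psi(t,y)$, I would write
\[
\sum_{\substack{n \leq y^\alpha \\ P^+(n)\leq y}} \frac 1n
= \frac{\Psi(y^\alpha,y)}{y^\alpha} + \int_1^{y^\alpha} \frac{\Psi(t,y)}{t^2}\,dt .
\]
The boundary term is $\rho(\alpha) + o(1) = O(1)$. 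For the integral, split at $t = y$: on $[1,y]$ every integer is $y$-smooth so $\Psi(t,y) = \fl{t}$ and the integral contributes $\log y + O(1)$; on $[y, y^\alpha]$ substitute $t = y^v$, so $dt/t^2 = y^{-v}\log y\,dv$ and $\Psi(y^v,y) = \rho(v)y^v + O(y^v/\log y)$, giving
\[
\int_y^{y^\alpha} \frac{\Psi(t,y)}{t^2}\,dt
= \log y \int_1^{\alpha} \rho(v)\,dv + O(1)
= \log y \int_1^{\alpha} (1 - \log v)\,dv + O(1) .
\]
Since $\int_1^\alpha (1-\log v)\,dv = (\alpha - 1) - (\alpha\log\alpha - \alpha + 1) = 2\alpha - 2 - \alpha\log\alpha$, we get $\sum_{n\leq y^\alpha,\,P^+(n)\leq y} 1/n = \log y + (2\alpha - 2 - \alpha\log\alpha)\log y + O(1) = (2\alpha - 1 - \alpha\log\alpha)\log y + O(1)$. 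Subtracting from $\alpha\log y + O(1)$ yields $(\alpha - 2\alpha + 1 + \alpha\log\alpha)\log y + O(1) = (\alpha\log\alpha - \alpha + 1)\log y + O(1)$, as claimed.

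The only mildly delicate point is controlling the error in $\Psi(y^v,y) = \rho(v)y^v + O(y^v/\log y)$ uniformly for $v \in [1,\alpha]$ and checking that it integrates to $O(1)$: one has $\int_y^{y^\alpha} \frac{1}{t^2}\cdot\frac{t}{\log y}\,dt \asymp \frac{1}{\log y}\cdot \log y = O(1)$, so this is harmless. I expect no real obstacle — the main thing to be careful about is bookkeeping of the constants and invoking the smooth-number estimate only in the range $u \le 2$ where the elementary formula $\rho(u) = 1 - \log u$ is valid. Alternatively, one could bypass $\rho$ entirely and argue directly: an integer $n \le y^\alpha$ with $P^+(n) > y$ factors uniquely as $n = pm$ with $p$ prime in $(y, y^\alpha]$ and $m \le y^\alpha/p$, so $\sum 1/n = \sum_{y < p \le y^\alpha} \frac 1p \sum_{m \le y^\alpha/p} \frac 1m$; the inner sum is $\log(y^\alpha/p) + O(1)$ provided $y^\alpha/p \ge 1$, and then Mertens' theorem plus partial summation over $p$ gives the same answer. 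I would present whichever of the two is shorter.
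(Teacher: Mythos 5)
Your main argument is correct and is essentially the paper's proof: complementary counting against the $y$-smooth integers, partial summation against $\Psi(t,y)$, and the Dickman density $\rho(u)=1-\log u$ on $[1,2]$; the only cosmetic difference is that the paper subtracts the harmonic sum over $(y,y^\alpha]$ up front (so its smooth sum runs over the same range), whereas you keep everything on $[1,y^\alpha]$ and split the resulting integral at $t=y$ --- these bookkeeping choices lead to the same computation of $\int_1^\alpha(1-\log v)\,dv$. The alternative you sketch at the end, factoring each counted $n$ uniquely as $n=pm$ with $y<p\le y^\alpha$ prime (unique because $\alpha\le 2$ forces $p^2>y^\alpha\ge n$) and then applying Mertens' theorem and $\sum_{y<p\le y^\alpha}(\log p)/p=(\alpha-1)\log y+O(1)$, is a genuinely different and more elementary route that avoids $\rho$ and $\Psi(x,y)$ entirely and gives the same answer; it is arguably the cleaner proof of this particular lemma, though the smooth-number formulation connects more directly to the Vinogradov-trick heuristic in Section~\ref{sect:Heuristic}.
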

\begin{proof}
We have
\[
\begin{split}
\sum_{\substack{n \leq y^\alpha \\ P^+(n) > y}} \frac{1}{n}
&=
\sum_{y < n \leq y^\alpha} \frac{1}{n} -
	\sum_{\substack{y < n \leq y^\alpha \\ P^+(n) \leq y}} \frac{1}{n} \\
&=
(\alpha - 1) \log y + O\Big(\frac{1}{y}\Big) -
	\sum_{\substack{y < n \leq y^\alpha \\ P^+(n) \leq y}} \frac{1}{n} .
\end{split}
\]
Thus it suffices to estimate the sum
\[
\sum_{\substack{y < n \leq y^\alpha \\ P^+(n) \leq y}} \frac{1}{n} .
\]
Applying partial summation and a change of variables, we obtain
\begin{equation}
\label{eq:MidStepInSmooth}
\sum_{\substack{y < n \leq y^\alpha \\ P^+(n) \leq y}} \frac{1}{n}
=
\frac{\Psi(y^\alpha, y)}{y^\alpha} - 1 +
(\log y) \int_1^\alpha \frac{\Psi(y^u, y)}{y^u} \, du ,
\end{equation}
where $\Psi(x,y)$ denotes the number of $y$-smooth numbers up to $x$. When $x$ is not much larger than $y$, the quantity $\Psi(x,y)$ is well-understood. For example, for fixed $\alpha \in [1,2]$ we have
\[
\frac{\Psi(y^\alpha, y)}{y^\alpha} =
1 - \log \alpha + O\Big(\frac{1}{\log y}\Big)
\]
uniformly in $y \geq 2$. (See the excellent survey article \cite{G08} for much more general results.) Employing this in \eqref{eq:MidStepInSmooth} and simplifying yields
\[
\sum_{\substack{y < n \leq y^\alpha \\ P^+(n) \leq y}} \frac{1}{n}
=
(-\alpha \log \alpha + 2\alpha - 2) \log y + O(1) .
\]
Combining this with our work above concludes the proof.
\end{proof}

\noindent
Having assembled all the ingredients, we can now prove Theorem \ref{thm:LowerBound}.

\begin{proof}[Proof of Theorem \ref{thm:LowerBound}]
Let $\chi := \xi \psi$ and let $q := k\ell$, so that $\chi\mod{q}$ is an even primitive character. In this case, P\'olya's Fourier expansion reads
\[
S_\chi(q\alpha) =
-\frac{\tau(\chi)}{2\pi i}
	\sum_{1 \leq |n| \leq q}
		\frac{\overline{\chi}(n)}{n} e(-n\alpha) + O(\log q) ,
\]
where $\tau(\chi)$ is the Gauss sum.
It follows that
\begin{equation}
\label{eq:MaxlEst}
M(\chi)  =
\frac{\sqrt{q}}{2 \pi} \max_\alpha \left|
\sum_{1 \leq |n| \leq q} \frac{\overline{\chi}(n)}{n} e(-n\alpha)
\right| + O(\sqrt{q}) .
\end{equation}
Pick any $N \leq q$.
Lemmas \ref{lem:Lemma1} and \ref{lem:Lemma2} imply
\[
\begin{split}
\max_\alpha \left|
\sum_{1 \leq |n| \leq q} \frac{\overline{\chi}(n)}{n} e(-n\alpha)
\right|
& \geq
\max_\alpha \left|
\sum_{1 \leq |n| \leq N} \frac{\overline{\chi}(n)}{n} e(-n\alpha)
\right| + O(1) \\
& \geq
\frac{\sqrt{\ell}}{\varphi(\ell)}
\left|
\sum_{\substack{1 \leq |n| \leq N \\ (n,\ell) = 1}}
	\frac{\overline{\xi}(n)}{n}
\right| + O(1) .
\end{split}
\]
Plugging this into \eqref{eq:MaxlEst} gives
\begin{equation}
\label{eq:LowerBdOnM1}
M(\chi)
\geq
\frac{\sqrt{k}}{\pi} \cdot \frac{\ell}{\varphi(\ell)}
\left|
\sum_{\substack{n \leq N \\ (n,\ell) = 1}}
	\frac{\xi(n)}{n}
\right|
+ O(\sqrt{q}) .
\end{equation}
It remains to choose $N$ to maximize the sum on the right hand side.

Write $N = y^\alpha$, where $y \leq n_\xi$ and $\alpha \in [1,2]$ will be determined later (note that $y^\alpha \leq q$). We are trying to find a lower bound on the magnitude of
\begin{equation}
\label{eq:LowerBdOnM2}
\sum_{\substack{n \leq y^\alpha \\ (n,\ell) = 1}} \frac{\xi(n)}{n}
=
\sum_{\substack{n \leq y^\alpha \\ (n,\ell) = 1}} \frac{1}{n} +
	\sum_{\substack{n \leq y^\alpha \\ (n,\ell) = 1}} \frac{\xi(n) - 1}{n} .
\end{equation}
Now, $\xi(n) = 1$ whenever $P^+(n) \leq y$, so we have
\begin{equation}
\label{eq:LowerBdOnM3}
\left|
	\sum_{\substack{n \leq y^\alpha \\ (n,\ell) = 1}} \frac{\xi(n) - 1}{n}
\right|
\leq
2 \sum_{\substack{n \leq y^\alpha \\ P^+(n) > y \\ (n,\ell) = 1}}
	\frac{1}{n} .
\end{equation}
Combining \eqref{eq:LowerBdOnM2} and \eqref{eq:LowerBdOnM3} gives
\[
\left|
	\sum_{\substack{n \leq y^\alpha \\ (n,\ell) = 1}} \frac{\xi(n)}{n}
\right|
\geq
\sum_{\substack{n \leq y^\alpha \\ (n,\ell) = 1}} \frac{1}{n} -
2 \sum_{\substack{n \leq y^\alpha \\ P^+(n) > y \\ (n,\ell) = 1}}
	\frac{1}{n} .
\]
Getting rid of the coprimality restrictions by standard arguments and applying Lemma \ref{lem:ForVino} yields
\[
\left|
	\sum_{\substack{n \leq y^\alpha \\ (n,\ell) = 1}} \frac{\xi(n)}{n}
\right|
\geq
\frac{\varphi(\ell)}{\ell} (-2\alpha \log \alpha + 3\alpha - 2) \log y
	+ O(\log \ell \log \log \ell) .
\]
Substituting this into \eqref{eq:LowerBdOnM1} gives the lower bound
\[
M(\chi) \geq
\frac{\sqrt{k}}{\pi} (-2\alpha \log \alpha + 3\alpha - 2) \log y + O(\sqrt{q}) .
\]
The right side is maximized when $\alpha = \sqrt{e}$ and $y = n_\xi$. Making these choices and rearranging the inequality concludes the proof of Theorem \ref{thm:LowerBound}.
\end{proof}

\section{Heuristics and Conjecture \ref{conj:StrongConj}}
\label{sect:Heuristic}

Let $\chi(n) := \xi(n) \left(\frac{n}{3}\right)$, where $\xi \mod{k}$ is an odd primitive character with $3 \dnd k$. Our inequality \eqref{eq:LowerBdOnM1} implies
\[
\begin{split}
M(\chi)
&\geq
\frac{\sqrt{k}}{\pi} \cdot \frac{3}{2}
\left|
\sum_{\substack{n \leq k \\ (n,3) = 1}}
	\frac{\xi(n)}{n}
\right|
+ O(\sqrt{k}) \\
& \approx
\frac{\sqrt{k}}{\pi}
\left|
\sum_{n \leq k}
	\frac{\xi(n)}{n}
\right|
+ O(\sqrt{k}) .
\end{split}
\]
Pick any $y \leq k$ and $0 = u_0 < u_1 < \cdots < u_\ell = \frac{\log k}{\log y}$. Then we can write
\[
\sum_{n \leq k} \frac{\xi(n)}{n} =
    \sum_{y^{u_0} \le n < y^{u_1}}\frac{\xi(n)}{n} +
    \sum_{y^{u_1} \le n < y^{u_2}}\frac{\xi(n)}{n} + \cdots +
    \sum_{y^{u_{\ell-1}} \le n < y^{u_\ell}}\frac{\xi(n)}{n} .
\]
Now, suppose $\xi(n) = 1$ for all $y$-smooth integers $n$. The proportion of $n \in [y^{u_{j-1}}, y^{u_j})$ which are $y$-smooth is roughly $\rho(u_j)$, where $\rho$ is the Dickman-de Bruijn function. If $n_\xi$ is large, then $\xi$ should behave randomly on those $n$ which are not $n_\xi$-smooth. We are therefore led to guess that, when $y = n_\xi$,
\[
    \sum_{y^{u_{j-1}} \le n < y^{u_j}} \frac{\xi(n)}{n}
        \approx \rho(u_j)
			\Big(\log y^{u_j} - \log y^{u_{j-1}}\Big) .
\]
This would imply
\[
\sum_{n \leq k} \frac{\xi(n)}{n} \approx
	(\log n_\xi) \sum_{j \leq \ell} (u_j - u_{j - 1}) \rho(u_j)
\approx
	(\log n_\xi) \int_0^{\frac{\log k}{\log n_\xi}} \rho(u) du .
\]
Since we presumably have $n_\xi \ll k^{o(1)}$, we see that
\[
\sum_{n \le k}\frac{\xi(n)}{n} \approx
(\log n_\xi) \int_0^\infty \rho(u) du =
e^\gamma \log n_\xi.
\]
Substituting this into our initial inequality, we are led to the statement of Conjecture \ref{conj:StrongConj}:
\[
\log n_\xi \leq
\left(\frac{\pi}{e^\gamma} + o(1)\right)
	\frac{M(\chi)}{\sqrt{k}} .
\]
Applying the conjectured bound \eqref{eq:ConjCharBound} to the even character $\chi \mod{3k}$ would yield
\[
n_\xi \ll (\log k)^{1 + o(1)} .
\]

\section{The case of the even character}
\label{sect:EvenChar}

Above, we showed that any improvement of P\'olya--Vinogradov for all even
primitive characters leads to improved bounds on $n_\xi$ for odd characters
$\xi$. What can we say about $n_\xi$ when $\xi$ is an even character?
Unfortunately, not much. However, we point out that the developing theory of
pretentiousness (pioneered by Granville and Soundararajan in \cite{GS07}) can
be used to demonstrate that for primitive even characters $\chi\mod{q}$ at least one of
$M(\chi)$ or $n_\chi$ must be small. As an illustration, we prove
\begin{theorem}
Fix $\epsilon > 0$. Then for any primitive even character $\chi\mod{q}$ with $q$ sufficiently large, we have
$n_\chi \leq \exp\Big((\log q)^{5/6 + \epsilon}\Big)$
or
$
M(\chi) \leq \sqrt{q} (\log q)^{2/3 + \epsilon} .
$
\end{theorem}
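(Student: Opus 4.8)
The plan is to show that if both alternatives fail — that is, if $n_\chi > \exp\big((\log q)^{5/6+\epsilon}\big)$ \emph{and} $M(\chi) > \sqrt{q}(\log q)^{2/3+\epsilon}$ — we reach a contradiction. The tool is the pretentious philosophy of Granville--Soundararajan: a character sum $M(\chi)$ can only be large if $\chi$ ``pretends'' to be a character of small conductor and small order, and such mimicry is incompatible with having a very large least nonresidue. Concretely, I would invoke the quantitative form of their result (as in \cite{GS07}) stating that if $M(\chi) \gg \sqrt{q}(\log q)^{1-\delta}$ for a primitive even $\chi \bmod q$, then there is a primitive character $\psi$ of small conductor $m$ and small order, with $m$ and the order both bounded by a power of $\log q$ governed by $\delta$, such that the distance $\mathbb{D}(\chi, \psi; q)^2 = \sum_{p \le q} \frac{1 - \mathrm{Re}\,\chi\bar\psi(p)}{p}$ is small — of size $O(\delta \log\log q)$ or so. Taking $\delta = 1/3 - \epsilon$ here, the hypothesis $M(\chi) > \sqrt q (\log q)^{2/3+\epsilon}$ forces $\chi$ to be $(\log q)^{O(1)}$-pretentious to such a $\psi$.

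The second step is to extract arithmetic information from this pretentiousness. Since $\mathbb{D}(\chi,\psi;q)$ is small, the product $\chi\bar\psi$ is close to the trivial character in the pretentious metric, which means $\chi\bar\psi(p) = 1$ for ``most'' primes $p$ up to $q$ in a weighted sense: the sum $\sum_{p \le z,\ \chi\bar\psi(p)\ne 1} \frac{1}{p}$ is small for $z$ up to $q$. But for every prime $p < n_\chi$ we have $\chi(p) = 1$, so $\chi\bar\psi(p) = \bar\psi(p)$, and since $\psi$ has conductor $m$ which is tiny compared to $n_\chi$, the value $\bar\psi(p)$ runs over the (small but nontrivial, as $\psi$ is primitive) set of character values as $p$ ranges over residue classes mod $m$. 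If $n_\chi$ is as large as $\exp\big((\log q)^{5/6+\epsilon}\big)$, then $\sum_{p < n_\chi} \frac{1}{p} \sim \log\log n_\chi \asymp (5/6+\epsilon)\log\log q$, and a positive proportion of this mass sits on primes $p$ with $\psi(p) \ne 1$ (by equidistribution of primes in progressions mod the small modulus $m$, or more cheaply by the pretentious analogue). That contributes $\gg \log\log q$ to $\mathbb D(\chi,\psi;q)^2$, contradicting the bound $\mathbb D(\chi,\psi;q)^2 = O((1/3)\log\log q)$ from Step 1 once the constants are arranged — the exponents $5/6$ and $2/3$ are chosen precisely so that $\tfrac{5}{6} > \tfrac{2}{3}$ leaves room.

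**The main obstacle** I anticipate is not the logical skeleton but getting the quantitative pretentious input in exactly the shape needed: I need a version of the Granville--Soundararajan large-values theorem that simultaneously controls (i) the conductor $m$ of the approximating character $\psi$, (ii) the \emph{order} of $\psi$ (needed so that $\psi$ genuinely interacts with the nonresidue condition — one wants $\psi$ to have even order so that $\psi(p)\ne1$ is a substantive constraint, or at least a nontrivial value set), and (iii) the distance $\mathbb D(\chi,\psi;q)$, all with explicit dependence on the savings exponent $\delta$. The cleanest route is probably to cite the relevant proposition from \cite{GS07} (their structure theorem underlying Theorem 6 there) rather than reprove it. A secondary technical point is handling primes $p$ with $n_\chi \le p \le q$ and small primes dividing $m$, but these contribute negligibly to the distance sums and can be absorbed into error terms; I would dispatch them with a crude bound rather than a careful one. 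Finally, the even-character hypothesis on $\chi$ is used only to invoke the correctly-normalized form of the maximal-sum/pretentiousness correspondence (the odd case has a different main term), so no extra care is needed there beyond citing the even version.
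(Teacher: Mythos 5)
Your argument reaches the same conclusion but takes a genuinely different route from the paper, and it contains one real gap. The paper does not pass through a structure theorem for $M(\chi)$: it computes directly that $n_\chi > \exp\big((\log q)^{5/6+\epsilon}\big)$ forces $\D(\chi,\chi_0;q)^2 \leq (\tfrac{1}{3}-2\epsilon)\log\log q + o(1)$, then invokes Lemma~3.3 of \cite{Balog-Granville-Sound} (which says every character \emph{other than} the unique most-pretentious one is at distance at least $(\tfrac13 + o(1))\log\log q$ from $\chi$) to conclude that the most-pretentious target must be the trivial character, and finally applies Theorem~2.9 of \cite{G}: an even primitive $\chi$ whose pretentious target is $\chi_0$ satisfies $M(\chi) \leq \sqrt{q}(\log q)^{2/3+o(1)}$. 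You instead run the implication the other way, starting from the Granville--Soundararajan structure theorem for large $M(\chi)$ and deriving a contradiction from the two competing estimates on $\D(\chi,\psi;q)^2$. That is a legitimate and arguably more self-contained route, and the exponents $5/6 > 2/3$ do leave enough room once the constants are written down carefully.

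The gap is in your justification of why $\psi$ is ``a substantive constraint.'' You correctly worry that the contradiction evaporates if $\psi$ can be trivial --- for then $\chi(p)=1$ on $p<n_\chi$ contributes nothing to $\D(\chi,\psi;q)^2$ --- but you locate the needed nontriviality in the \emph{order} of $\psi$ (``one wants $\psi$ to have even order''). That is the wrong invariant and cannot in general be arranged. What the structure theorem actually supplies, and what you should use, is a \emph{parity} condition: the pretentious target responsible for a large partial sum satisfies $\chi\psi(-1)=-1$, so for $\chi$ even the character $\psi$ is necessarily \emph{odd}, hence automatically nontrivial. This is precisely what makes your lower bound $\D(\chi,\psi;q)^2 \geq \sum_{p<n_\chi}\tfrac{1-\Re\bar\psi(p)}{p} \gg \log\log n_\chi - O(\log\log\log q)$ go through. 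Replace your concern (ii) about the order of $\psi$ with the parity condition $\psi(-1)=-\chi(-1)$, and the proposal closes up; note also that the even-character hypothesis is thus doing more than fixing a normalization --- it is exactly what rules out $\psi = \chi_0$.
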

\begin{proof}
Suppose $n_\chi > \exp\Big((\log q)^{5/6 + \epsilon}\Big)$. Then
\begin{equation}
\label{eq:TBD}
\begin{split}
\D(\chi,\chi_0; q)^2
:=&
\sum_{p \leq q} \frac{1 - \Re \chi(p)}{p} \\
=&
\sum_{n_\chi \leq p \leq q}
\frac{1 - \Re \chi(p)}{p} \\
\leq&
\sum_{n_\chi \leq p \leq q} \frac{2}{p} \\
=&
\left(\frac{1}{3} - 2\epsilon\right) \log \log q +
o(1).
\end{split}
\end{equation}
In Lemma 3.3 of \cite{Balog-Granville-Sound}, Balog, Granville and
Soundararajan show that for all $\psi$ other than the most pretentious
character,
\[
    \D(\chi(n),\psi(n) n^{it}; q)^2 \geq
        \left(\frac{1}{3} + o(1)\right) \log \log q
\]
for all $t$. It follows that for $q$ sufficiently large, $\chi$ must be most pretentious to the trivial
character. If $\chi$ is even, then Theorem 2.9 of \cite{G} implies that
\[
M(\chi) \leq \sqrt{q} (\log q)^{2/3 + o(1)} .
\]
Once more assuming $q$ is large enough, we deduce the claimed bound on $M(\chi)$.
\end{proof}



\begin{bibdiv}
\begin{biblist}

\bib{An}{article}{
   author={Ankeny, N. C.},
   title={The least quadratic non residue},
   journal={Ann. of Math. (2)},
   volume={55},
   date={1952},
   pages={65--72},
}

\bib{Balog-Granville-Sound}{article}{
   author={Balog, Antal},
   author={Granville, Andrew},
   author={Soundararajan, Kannan},
   title={Multiplicative functions in arithmetic progressions},
   journal={Ann. Math. Qu\'e.},
   volume={37},
   date={2013},
   number={1},
   pages={3--30},
}

\bib{BoberAvg}{article}{
   author={Bober, Jonathan},
   title={Averages of character sums},
   volume={},
   date={},
   number={},
   pages={},
   note={Preprint available at
   \href{http://arxiv.org/abs/1409.1840}{arXiv:1409.1840}}
}

\bib{GSmooth}{article}{
   author={Goldmakher, Leo},
   title={Character sums to smooth moduli are small},
   journal={Canad. J. Math.},
   volume={62},
   date={2010},
   number={5},
   pages={1099--1115},
}

\bib{G}{article}{
   author={Goldmakher, Leo},
   title={Multiplicative mimicry and improvements to the P\'olya--Vinogradov
   inequality},
   journal={Algebra Number Theory},
   volume={6},
   date={2012},
   number={1},
   pages={123--163},
}

\bib{GLodd}{article}{
   author={Goldmakher, Leo},
   author={Lamzouri, Youness},
   title={Lower bounds on odd order character sums},
   journal={Int. Math. Res. Not. IMRN},
   date={2012},
   number={21},
   pages={5006--5013},
}

\bib{GLeven}{article}{
    author={Goldmakher, Leo},
    author={Lamzouri, Youness},
    title={Large even order character sums},
    journal={Proc. Amer. Math. Soc.},
    volume={142}
    date={2014},
    number={8},
    pages={2609--2614}
}

\bib{G08}{book}{
   author={Granville, Andrew},
   title={Smooth numbers: computational number theory and
	beyond},
   series={Math. Sci. Res. Inst. Publ.},
   volume={44},
   publisher={Cambridge Univ. Press, Cambridge},
   date={2008},
   pages={267--323},
}

\bib{GS07}{article}{
   author={Granville, Andrew},
   author={Soundararajan, K.},
   title={Large character sums: pretentious characters and the
   P\'olya--Vinogradov theorem},
   journal={J. Amer. Math. Soc.},
   volume={20},
   date={2007},
   number={2},
   pages={357--384},
}

\bib{Hi86}{article}{
   author={Hildebrand, Adolf},
   title={A note on Burgess' character sum esimate},
   journal={C. R. Math. Rep. Acad. Sci. Canada},
   volume={VIII},
   date={1986},
   number={1},
   pages={35--37},
}

\bib{H88}{article}{
   author={Hildebrand, Adolf},
   title={Large values of character sums},
   journal={J. Number Theory},
   volume={29},
   date={1988},
   number={3},
   pages={271--296},
}

\bib{stephens}{article}{
   author={Stephens, P. J.},
   title={Optimizing the size of $L(1,\,\chi )$},
   journal={Proc. London Math. Soc. (3)},
   volume={24},
   date={1972},
   pages={1--14},
}

\end{biblist}
\end{bibdiv}

\end{document}